\newtheorem{theorem}{Theorem}[section]
\newtheorem{lemma}[theorem]{Lemma}
\theoremstyle{remark}
\title[Non-Trivial Kazhdan-Lusztig Coefficients]
{Some Non-Trivial Kazhdan-Lusztig Coefficients
 of an   Affine
Weyl Group \\ of Type $\tilde A_n$}
\author[L. Scott and N. Xi]{Leonard Scott$^{*}$ and Nanhua Xi$^{\dagger}$}
\address{$^{*}$
Department of Mathematics\\
University of Virginia \\
Charlottesville, VA22903 \\
USA} \email{lls2l@virginia.edu}
\address{$^{\dagger}$
Hua Loo-Keng Key Laboratory of Mathematics and
Institute of Mathematics\\
Chinese Academy of Sciences\\
Beijing, 100190\\
China } \email{nanhua@math.ac.cn}
\thanks{$*$ L. Scott was supported
by NSF}
\thanks{$\dagger$ N. Xi was partially supported by Natural Sciences Foundation
of China (No. 10671193).}
\begin{document}
\baselineskip=18pt
\begin{abstract}
In this paper we show that the leading coefficient $\mu(y,w)$ of
some Kazhdan-Lusztig polynomials $P_{y,w}$ with $y,w$ in an affine
Weyl group of type $\tilde A_n $ is $n+2$. This fact has some
consequences on the dimension of first extension groups of finite
groups of Lie type with irreducible coefficients.

\end{abstract}

\maketitle

\def\Cal{\mathcal}
\def\bold{\mathbf}
\def\ca{\mathcal A}
\def\cdz{\mathcal D_0}
\def\cd{\mathcal D}
\def\cdo{\mathcal D_1}
\def\bold{\mathbf}
\def\l{\lambda}
\def\le{\leq}

Given two elements $y\leq w$ in a Coxeter group $(W,S)$ ($S$ the set
of simple reflections), we have a Kazhdan-Lusztig polynomial
$P_{y,w}$ in an indeterminate $q$. If $y<w$, the degree of $P_{y,w}$
is less than or equal to $\frac12(l(w)-l(y)-1)$. Particularly
interesting is the coefficient $\mu(y,w)$ of the term
$q^{\frac12(l(w)-l(y)-1)}$ in $P_{y,w}$, since it plays a key role
in understanding Kazhdan-Lusztig polynomials and in a recursive
formula for them. Moreover, this ``leading"  coefficient (it can be
zero) is important in representation theory and in understanding
cohomology and first extension groups for irreducible modules of
algebraic groups and of finite groups of Lie type.

However, it is in general hard to compute the leading coefficient.
In \cite{L6} Lusztig computes the leading coefficient for some
Kazhdan-Lusztig polynomials of an affine Weyl group of type $\tilde
B_2$, more are computed in \cite{W}. In \cite{S} for an affine Weyl
group of type $\tilde A_5$, some non-trivial leading coefficients
are worked out. McLarnan and Warrington have shown that $\mu(y,w)$
can be greater than 1 for symmetric groups, see \cite{MW}. In
\cite{X3}, Xi shows that if $a(y)<a(w)$, then $\mu(y,w)\le 1$ when
$W$ is a symmetric group or an affine Weyl group of type $\tilde
A_n$.

In this paper we  show that the leading coefficient $\mu(y,w)$ of
some Kazhdan-Lusztig polynomials $P_{y,w}$ with $y,w$ in an  affine
Weyl group of type $\tilde A_n $ is $n+2$ (see Theorem 3.3). There
is a well-known identification \cite{A1} of this coefficient with
dimensions of first extension groups for irreducible modules of the
underlying algebraic group, which here is $SL_{n+1}(\bar{\mathbb
F}_p)$, in the presence of the Lusztig conjecture (known to hold for
$p$ very large \cite{AJS}). Thus, our results show the dimensions of
these first extension groups can be arbitrarily large as $n$ becomes
large. Taken together with \cite{CPS2}, this implies that the
corresponding first extension groups for the finite groups
$SL_{n+1}(\mathbb F_q)$, $q$ a sufficiently large power of (a
sufficiently large) prime $p$, must also have unbounded dimensions.
In particular, a well-known conjecture of Robert Guralnick \cite{G},
that there exists a universal constant bound on dimensions of the
first cohomology groups of finite groups (with faithful absolutely
irreducible modules as coefficients), cannot be extended to first
extension groups.

In Section 5 we  give a representation-theoretic approach to (a
variation on) Theorem 3.3. It does not yield the same precise
calculation\footnote{H. Andersen has recently provided a way to
obtain precise formulas, as in Theorem 3.3, from the
representation-theoretic approach of Section 5 (but still using the
Coxeter group lemma, Lemma 3.4), by using some homological results
of \cite{AJ}. We sketch Andersen's argument in Remark 5.3 (d).}, but
applies to more weights; see Remark 5.3(c). More importantly, it
yields an independent confirmation of the fact demonstrated by
Theorem 3.3, that the coefficients $\mu(y,w)$, and the dimensions of
first extension groups which correspond to them, can go to infinity
with $n$.

\section{Springer's formula}

In this section we recall some basic facts and a formula of Springer
for the leading coefficient $\mu(y,w)$.

\subsection{}
  Let $G$ be a
connected, simply connected reductive algebraic group over the field
{\bf C} of complex numbers  and $T$ a maximal torus of $G$. Let
$N_G(T)$ be the normalizer of $T$ in $G$. Then $W_0=N_G(T)/T$ is a
Weyl group, which acts on the character group $X={\rm{Hom}}(T,\bold
C^*)$ of $T$. The semi-direct product $W_0\ltimes X$ is  called an
extended affine Weyl group, denoted by $W$. It contains the affine
Weyl group $W_a$, the semi-direct product of $W_0$ and the root
lattice. We shall denote by $S$ the set of simple reflections of
$W$. We shall denote the length function of $W$ by $l$ and use
$\leq$ for the Bruhat order on $W$. We refer to subsection 2.1 for a
formula of the length function, see also subsections 1.1 and 1.2 in
\cite{L5} or section 1.1 in \cite{X2} for the length function and
Bruhat order. See also \cite{L2}, where Lusztig explains how to
carry over notions from \cite{KL}, including Kazhdan-Lusztig
polynomials, to $(W,S)$ and a Hecke algebra for it.

Let $H$ be the  Hecke algebra  of $(W,S)$ over $\Cal A=\bold
Z[q^{\frac 12},q^{-\frac 12}]$ $(q$ an indeterminate) with parameter
$q$. Let  $\{T_w\}_{w\in W}$ be its standard basis. Let
$C_w=q^{-\frac {l(w)}2}\sum_{y\le w}P_{y,w}T_y,\ w\in W$ be the
Kazhdan-Lusztig basis of $H$, where $P_{y,w}$ are the
Kazhdan-Lusztig polynomials. The degree of $P_{y,w}$ is less than or
equal to $\frac12(l(w)-l(y)-1)$ if $y<w$. We write
$P_{y,w}=\mu(y,w)q^{\frac12(l(w)-l(y)-1)}$+lower degree terms. The
coefficient $\mu(y,w)$ is very interesting, this can be seen even
from the recursive formula (see \cite{KL}) for Kazhdan-Lusztig
polynomials. We shall call $\mu(y,w)$  the {\it Kazhdan-Lusztig
coefficient} of $P_{y,w}$. The extended and usual (non-extended)
affine Weyl groups have essentially the same Kazhdan-Lusztig
polynomials. For more details about Hecke algebras of extended
affine Weyl groups, we refer to Section 4 in \cite{L2}, or
Subsection 1.2 in \cite{L5}, or Sections 1.1 and 1.6 in \cite{X2}.

\subsection{} Write $$C_xC_y=\sum_{z\in W}h_{x,y,z}C_z,\qquad
h_{x,y,z}\in \mathcal A
 =\bold Z[q^{\frac 12},q^{-\frac 12}].$$
 Following Lusztig (\cite{L3}), we define
 $$a(z)={\rm min}\{i\in\bold N\ |\ q^{-\frac i2}h_{x,y,z}\in\bold Z[q^{-\frac
 12}]{\rm\ for \ all\ }x,y\in W\}.$$ If for any $i$,
 $q^{-\frac i2}h_{x,y,z}\not\in\bold Z[q^{-\frac
 12}]{\rm\ for \ some\ }x,y\in W$, we set $a(z)=\infty.$
 Then $a(w)\le
l(w_0)$ for any $w\in W$, where $w_0$ is the longest element of
$W_0$ (see \cite{L3}]).

Following Lusztig and Springer,
 we define $\delta_{x,y,z}$ and $\gamma_{x,y,z}$ by the following formula,
 $$h_{x,y,z}=\gamma_{x,y,z}q^{\frac {a(z)}2}+\delta_{x,y,z}q^{\frac
 {a(z)-1}2}+
 {\rm\ lower\ degree\ terms}.$$
Springer showed that $l(z)\ge a(z)$ (see \cite{L4}). Let $\delta(z)$
be the
 degree of $P_{e,z}$, where $e$ is the neutral element of $W$.
 Then actually one has $l(z)-a(z)-2\delta(z)\ge 0$ (see \cite{L4}). Set
 $$\cd_i=\{z\in W\ |\ l(z)-a(z)-2\delta(z)=i\}.$$The number $\pi(z)$ is
 defined by $P_{e,z}=\pi(z)q^{\delta(z)}+
 {\rm\ lower\ degree\ terms}.$

 The elements of $\cdz$ are involutions, called distinguished involutions of
 $(W,S)$ (see \cite{L4}).

\def\ll{\underset {L}{\leq}}
\def\rl{\underset {R}{\leq}}
\def\lrl{\underset {LR}{\leq}}
\def\llr{\lrl}
\def\el{\underset {L}{\sim}}
\def\er{\underset {R}{\sim}}
\def\elr{\underset {LR}{\sim}}
\def\ds{\displaystyle\sum}

 \subsection{}  Assume that $(W,S)$ is an extended affine Weyl group or a Weyl
group. The
 following formula is due to Springer \cite{Sp}(see \cite{X2} for a sketchy proof),

$$\begin{array}{rl}\mu(y,x)&=\displaystyle\sum_{d\in\cdz}\delta_{y^{-1},x,d}+
 \displaystyle\sum_{f\in\cdo}\gamma_{y^{-1},x,f}\pi(f)\\[3mm] &=\displaystyle\sum_{d\in\cdz}\delta_{y,x^{-1},d}+
 \displaystyle\sum_{f\in\cdo}\gamma_{y,x^{-1},f}\pi(f).\end{array}$$

 \medskip
\def\vp{\varphi}
\def\st{\stackrel}
\def\sc{\scriptstyle}

\subsection{}  We refer to \cite{KL} for the definition of the
preorders $\ll,\ \rl,\lrl$ and of the equivalence relations $\el,\
\er,\ \elr$ on $W$. The corresponding equivalence classes are called
{ left cells, right cells, two-sided cells} of $W$, respectively.
The preorder $\ll$ (resp. $\rl;\lrl$) induces a partial order on the
set of left (resp. right; two-sided) cells of $W$, denoted again by
$\ll$ (resp. $\rl;\lrl$). For a Weyl group or an extended affine
Weyl
 group, Springer showed [Sp]
the following results (a) and (b) (see \cite{X3})

\medskip

\noindent (a) Assume that $\mu(y,w)$ or $\mu(w,y)$ is nonzero,
then
 $y\ll w$ and $y\rl w$ if $a(y)<a(w)$, and $y\el w$ or $y\er w$ if
 $a(y)=a(w)$.

\medskip

\noindent (b) If $\delta_{x,y,z}\ne 0$, then $z\el y$ or $z\er x$.
 (Note that  $h_{x,y,z}\ne 0$ implies that $a(z)\ge a(x)$ and
 $a(z)\ge a(y)$, see \cite{L3}.)

\medskip

For $w\in W$, set $L(w)=\{s\in S\ |\ sw\leq w\}$, \ $R(w)=\{s\in S\
|\ ws\leq w\}.$ Then we have (see \cite{KL})

\medskip

\noindent(c) $R(w)\subseteq R(y)$ if $y\ll w$. In particular,
$R(w)=R(y)$ if $y\el w$;

\medskip

\noindent(d) $L(w)\subseteq L(y)$ if $y\rl w$. In particular,
$L(w)= L(y)$ if $y\er w$.

\section{The lowest two-sided cell}

In this section we collect some facts about the lowest two-sided
cell of $W$.

\def\gz{\Gamma_0}

\subsection{} Let $w_0$ be the longest element of $W_0$. Let
$$\Gamma_0=\{ww_0\,|\, w\in W,\ \ l(ww_0)=l(w)+l(w_0)\}.$$ Then
$\gz$ is a left cell (see \cite{L3}). The Kazhdan-Lusztig
polynomials $P_{y,w}$ for $y,w$ in $\Gamma_0$ play a key role in
Lusztig's conjectures on irreducible characters of algebraic groups,
of quantum groups at roots of unity and of affine Lie algebras. In
this paper we are interested in the Kazhdan-Lusztig coefficient of
$P_{y,w}$ for $y,w$ in $\gz$.

It is known (see \cite{Sh1}) that $c_0=\{w\in W|a(w)=l(w_0)\}$ is a
two-sided cell and contains $\gz$. In fact, $c_0$ is the lowest
two-sided cell with respect to the partial order $\lrl$ on the set
of two-sided cells of $W$ and  $c_0$ has $|W_0|$ left cells (see
\cite{Sh2}).

\def\st{\stackrel}
\def\sc{\scriptstyle}
Let  $R^+$ (resp. $R^-$, $\Delta$) be the set of positive (resp.
negative, simple) roots of  the root system $R$ of $W_0$. Then the
length of $xw$ ($w\in W_0,\ x\in X)$ is given by the formula (see
\cite{IM}) $$l(xw)=\displaystyle\sum_{\st{\sc \alpha\in R^+}
{w(\alpha)\in R^-}}|\langle
x,\alpha^\vee\rangle+1|+\displaystyle\sum_{\st{\sc \alpha\in R^+}
{w(\alpha)\in R^+}}|\langle x,\alpha^\vee\rangle|.$$Let $X^+=\{x\in
X|l(xw_0)=l(x)+l(w_0)\}$ be the set of dominant weights of $X$. For
each simple root $\alpha$ we denote by $s_\alpha$  the corresponding
simple reflection in $ W_0$ and $x_\alpha$ the corresponding
fundamental weight. Then we have $s_\alpha(y)=y-\langle
y,\alpha^\vee\rangle\alpha$ for any $y\in X$ and $\langle
x_\alpha,\beta^\vee\rangle=\delta_{\alpha\beta}$ for any simple
roots $\alpha,\ \beta$.  For each $w\in W_0$, we set
$$d_w=w\prod_{\st {\alpha\in\Delta}{ w(\alpha)\in R^-}}x_\alpha.$$ Then
$$c_0=\{d_wxw_0d_u^{-1}|w,u\in W_0, x\in X^+\}.$$
Moreover, the set $c'_{0,w}=\{d_wxw_0d_u^{-1}|u\in W_0, x\in X^+\}$
is a right cell of $W$ and $c_{0,u}=\{d_wxw_0d_u^{-1}|w\in W_0, x\in
X^+\}$ is a left cell of $W$. The distinguished involutions of $c_0$
are $d_ww_0d_w^{-1},\ w\in W_0$. The set $\{d_w\,|\, w\in W_0\}$ can
also be described as $\{z\in W\,|\,zw_0\in c_0 \text{ but }
zw_0s\not\in c_0 \text{ for any } s\in W_0-\{e\}\}.$ See \cite{Sh2}.

\subsection{}   For $x\in X^+$ let
$V(x)$ be a rational irreducible $G$-module of highest weight $x$
and let $S_x$ be the corresponding element defined in \cite{L2}.
Then $S_x,\ x\in X^+$, form an $\Cal A$-basis of the center of $H$.
For $w\in W_0$ we define
$$E_{d_w}=q^{-\frac {l(d_w)}2}\sum_{\st{\sc y\le d_w}
{l(yw_0)=l(y)+l(w_0)}} P_{yw_0,d_ww_0}T_y$$ and
$$F_{d_w}=q^{-\frac {l(d_w)}2}\sum_{\st{\sc y\le d_w}
{l(yw_0)=l(y)+l(w_0)}} P_{yw_0,d_ww_0}T_{y^{-1}}.$$ Then we have
(see \cite[Corollary 2.11]{X1} and \cite[Proposition 8.6]{L2})

(a) $E_{d_w}S_xC_{w_0}F_{d_u}=C_{d_wxw_0d_u^{-1}}$ for any $w,u\in
W$ and $x\in X^+$.

(b) $S_xS_y=\sum_{z\in X^+}m_{x,y,z}S_z$ for any $x,y\in X^+$.
Here $m_{x,y,z}$ is defined to be the multiplicity of $V(z)$ in
the tensor product $V(x)\otimes V(y)$.
\def\Hom{{\text{Hom}}}

Using (a), (b) and 1.3 we get

(c) Let $w,w',u\in W_0,\ y,z\in X^+$ and let $d_wyw_0d_u^{-1}, \
d_{w'}zw_0d_u^{-1}$ be elements of the left cell $c_{0,u}$. Then
$\mu(d_wyw_0d_u^{-1}, d_{w'}zw_0d_u^{-1})=\mu(d_wyw_0,
d_{w'}zw_0)=\mu(d_wz^*w_0, d_{w'}y^*w_0)$, here $y^*=w_0y^{-1}w_0,
z^*=w_0z^{-1}w_0$. (We set $\mu(x,v)=\mu(v,x)$ if $v\le x$.)

We give some explanation for (c). By (a) we have
$$C_{d_uw_0y^{-1}d_w^{-1}}C_{
d_{w'}zw_0d_u^{-1}}=E_{d_u}C_{w_0y^{-1}d_w^{-1}}C_{
d_{w'}zw_0}F_{d_u}.$$ Note that
$$C_{w_0y^{-1}d_w^{-1}}C_{
d_{w'}zw_0}=\sum_{x\in X^+}h_{w_0y^{-1}d_w^{-1},
d_{w'}zw_0,xw_0}C_{xw_0}.$$ Using (a) we then get
$$C_{d_uw_0y^{-1}d_w^{-1}}C_{
d_{w'}zw_0d_u^{-1}}=\sum_{x\in X^+}h_{w_0y^{-1}d_w^{-1},
d_{w'}zw_0,xw_0}C_{d_uxw_0d_u^{-1}}.$$ So we have
$$h_{d_uw_0y^{-1}d_w^{-1},
d_{w'}zw_0d_u^{-1},d_uxw_0d_u^{-1}}=h_{w_0y^{-1}d_w^{-1},d_{w'}zw_0,xw_0}.$$

This implies

$$\gamma_{d_uw_0y^{-1}d_w^{-1},
d_{w'}zw_0d_u^{-1},d_uxw_0d_u^{-1}}=\gamma_{w_0y^{-1}d_w^{-1},d_{w'}zw_0,xw_0},$$
$$\delta_{d_uw_0y^{-1}d_w^{-1},
d_{w'}zw_0d_u^{-1},d_uxw_0d_u^{-1}}=\delta_{w_0y^{-1}d_w^{-1},d_{w'}zw_0,xw_0}.$$

If $w\ne w'$, then $d_wyw_0d_u^{-1}, d_{w'}zw_0d_u^{-1}$ are in
different right cells and $d_wyw_0, d_{w'}zw_0 $ are in different
right cells. So for any $x\in X^+$ we have
$$\gamma_{d_uw_0y^{-1}d_w^{-1},
d_{w'}zw_0d_u^{-1},d_uxw_0d_u^{-1}}=\gamma_{w_0y^{-1}d_w^{-1},d_{w'}zw_0,xw_0}=0.$$
By the formula of Springer in 1.3 and the above formula for $\delta
$, we get
$$\mu(d_wyw_0d_u^{-1}, d_{w'}zw_0d_u^{-1})=\delta_{d_uw_0y^{-1}d_w^{-1},
d_{w'}zw_0d_u^{-1},d_uw_0d_u^{-1}}$$
$$=\delta_{w_0y^{-1}d_w^{-1},d_{w'}zw_0,w_0}=\mu(d_wyw_0,
d_{w'}zw_0).$$

If $w=w'$  and $d_wyw_0d_u^{-1}\le d_{w'}zw_0d_u^{-1}$, then $y\le
z$. So $y^{-1}z$ is in the root lattice. Thus $l(z)-l(y)$ is even
since $l(y^{-1}z)$ is even. Therefore the values of $\mu$ in (c) are
all equal to 0 in this case. We have explained the first equality
for $\mu$ in (c)

 By (a) we get
$$C_{w_0y^{-1}d_w^{-1}}C_{ d_{w'}zw_0}=S_{y^*}S_z C_{w_0 d_w^{-1}}C_{
d_{w'} w_0},$$ $$C_{w_0(z^*)^{-1}d_w^{-1}}C_{
d_{w'}y^*w_0}=S_{(z^*)^*}S_{y^*} C_{w_0 d_w^{-1}}C_{ d_{w'} w_0}.$$

 Since $(z^*)^*=z$, using the formula of Springer in
1.3 we see that the second equality in (c) is true.

\section{Main results}

Now we can state our main results.

\begin{theorem} There is a positive integer $B$ such
that $\mu(y,w)\le B$ for all $y,w\in c_0$. In other words, the
Kazhdan-Lusztig coefficients $\mu(y,w)$ are bounded on $c_0\times
c_0$. (Recall $\mu(w,y)=\mu(y,w)$ if $y\le w$. Also, we set
$\mu(y,w)=\mu(w,y)=0$ if  $y\nleq w$ and $w\nleq y$.)
\end{theorem}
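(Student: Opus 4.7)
The plan is to combine Lemma 2.2(c) with Springer's formula, then exploit the central/matrix structure of Section 2 to express $\mu$ as a finite sum of tensor-product multiplicities weighted by fixed integer constants determined by a pair $(w,w')\in W_0\times W_0$.

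\emph{Reduction and collapse of Springer's formula.} If $\mu(y,w)\ne 0$ with $y,w\in c_0$, then 1.4(a) forces $y\el w$ or $y\er w$; the symmetry $\mu(y,w)=\mu(y^{-1},w^{-1})$ (and the fact that inversion interchanges left and right cells) reduces the second case to the first, so I may assume $y,w$ lie in a common left cell $c_{0,u}$, and Lemma 2.2(c) then reduces the problem to bounding $\mu(\alpha,\beta)$ for $\alpha=d_w yw_0$, $\beta=d_{w'}zw_0$ with $w,w'\in W_0$ and $y,z\in X^+$. The case $w=w'$ gives $\mu=0$ by the parity observation in the explanation of 2.2(c), so I restrict to $w\ne w'$. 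Applying Springer's formula to $\mu(\alpha,\beta)$, the $a$-function inequality forces any contributing $d\in\cdz$ or $f\in\cdo$ to lie in $c_0$; 1.4(b) together with the cell data of $\alpha^{-1}$ and $\beta$ (both meeting the index $e$ in the parameterization of 2.1) restricts the distinguished involutions to $d=w_0$, and the argument in the explanation of 2.2(c) shows $\gamma_{\alpha^{-1},\beta,f}=0$ for all $f\in c_0$ when $w\ne w'$. Hence $\mu(\alpha,\beta)=\delta_{\alpha^{-1},\beta,w_0}$.

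\emph{Central factorization.} By 2.2(a), $C_{\alpha^{-1}}C_\beta = S_{y^*}S_z\,\bigl(C_{w_0 d_w^{-1}}C_{d_{w'}w_0}\bigr)$. The Hecke elements in the last factor are finite $\mathcal{A}$-linear combinations in the $T$-basis, so their product has finite support in the $C$-basis; in particular the set $X_0(w,w')=\{x\in X^+:\delta_{w_0 d_w^{-1},d_{w'}w_0,xw_0}\ne 0\}$ is finite. Substituting $S_{y^*}S_z=\sum_u m_{y^*,z,u}S_u$ from 2.2(b) and $S_u C_{xw_0}=\sum_v m_{u,x,v}C_{vw_0}$, matching the coefficient of $C_{w_0}$ on both sides, and using $m_{u,x,e}=\delta_{x,u^*}$ (Schur: the trivial representation appears in $V(u)\otimes V(x)$ iff $x=u^*$), I obtain
\[
\mu(\alpha,\beta) \;=\; \delta_{\alpha^{-1},\beta,w_0} \;=\; \sum_{u\in X^+} m_{y^*,z,u}\,\delta_{w_0 d_w^{-1},\,d_{w'}w_0,\,u^*w_0}.
\]

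\emph{Uniform bound and the main obstacle.} Only $u$ with $u^*\in X_0(w,w')$ contribute, so the sum has at most $|X_0(w,w')|$ nonzero terms, and the scalars $\delta_{w_0 d_w^{-1},d_{w'}w_0,u^*w_0}$ are fixed integers depending only on $(w,w',u)$. For fixed $u$, the multiplicity $m_{y^*,z,u}$ equals the multiplicity of $V(z)$ in $V(u)\otimes V(y)$, which by Steinberg's alternating-sum formula is bounded by $|W_0|\cdot\dim V(u)$ uniformly in $y,z\in X^+$. Combining these bounds gives $|\mu(\alpha,\beta)|\le B(w,w')$ for some constant depending only on $(w,w')$, and $B=\max_{w,w'\in W_0}B(w,w')$ is the required universal bound. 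The main obstacle is this last step: for fixed $u$ in a finite set, one must bound the tensor multiplicity $m_{y^*,z,u}$ independently of $y,z\in X^+$, which I would extract from Steinberg's formula expressing it as an alternating sum of weight-space dimensions of the fixed module $V(u)$. Everything else amounts to careful bookkeeping combining the central decomposition of Section 2 with Springer's formula.
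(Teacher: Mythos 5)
Your proposal is correct and follows essentially the same route as the paper: reduce to $\Gamma_0\times\Gamma_0$ via 2.2(c), collapse Springer's formula to the single term $\delta_{y^{-1},w,w_0}$ using the right-cell/parity argument, and factor through the central elements $S_x$ to write $\mu$ as a finite sum of tensor-product multiplicities against finitely many fixed $\delta$-constants. The only (harmless) divergence is the last estimate: the paper bounds the multiplicity via the adjunction $\dim\text{Hom}_G(V(z_1)\otimes V(x'),V(x))\le\dim V(z_1)$, whereas you invoke the Brauer--Steinberg alternating-sum formula to get $|W_0|\cdot\dim V(u)$; both give a constant depending only on the finite data, so the argument goes through.
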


\begin{proof} Let $y,w\in c_0$ and assume that $\mu(y,w)\ne 0$. Then by
1.4(a) we have $y\el w$ or $y\er w$. It is no harm to assume that
$y\el w$. Thus we can find $v\in W_0$ and $y',w'\in W$ such that
$y=y'w_0d_v^{-1}$ and $w=w'w_0d_v^{-1}$ and
$l(y)=l(y')+l(w_0)+l(d_v^{-1})$ and $l(w)=l(w')+l(w_0)+l(d_v^{-1})$.
Using 2.2(c) we can see that $\mu(y,w)=\mu(y'w_0,w'w_0)$. Thus, to
prove the theorem we only need to show that $\mu$ is bounded on
$\gz\times\gz$. When $W$ has type $\tilde A_1$, it is easy to see
that $\mu(y,w)\le 1$ for all $y,w$ in $W$. In general let
$y=d_uxw_0,\ w=d_{u'}x'w_0\in \gz$, where $u,u'\in W_0$ and $x,x'\in
X^+$, be such that $\mu(y,w)\ne 0$.  If $y\er w$, then $u=u'$ and
$x\le x'$. This implies that $x'x^{-1}$ is  in the root lattice and
$l(y)-l(w)\equiv 0$(mod 2) since $y\el w$. This is impossible, so
$u\ne u'$ and $y$ and $w$ are not in the same right cell. Thus
$\gamma_{y^{-1},w,z}=0$ for any $z$ in $W$. By Springer formula in
1.3, we have $\mu(y,w)=\delta_{y^{-1},w,w_0}.$ Set
$x^*=w_0x^{-1}w_0\in X^+$. By 2.2 (a),
$C_{y^{-1}}C_w=S_{x^*}S_{x'}C_{w_0d_u^{-1}}C_{d_{u'}w_0}$. There are
only finitely many $z$ in $W$ such that
$h_{w_0d_u^{-1},d_{u'}w_0,z}\ne 0$ and if
$h_{w_0d_u^{-1},d_{u'}w_0,z}\ne 0$ then $z=z_1w_0$ for some $z_1\in
X^+$. Thus we have $\mu(y,w)=\sum_{z_1\in
X^+}m_{x^*,x',w_0z_1^{-1}w_0}\delta_{w_0d_u^{-1},d_{u'}w_0,z_1w_0}$.
Let $z_1^*=w_0z_1^{-1}w_0$. Then
$m_{x^*,x',w_0z_1^{-1}w_0}=\dim\Hom_G(V(x^*)\otimes
V(x'),V(z_1^*))=\dim\Hom_G(V(z_1)\otimes V(x'),V(x))\le\dim V(z_1)$.
Let $$B=\max\{\sum_{z_1\in X^+}\dim
V(z_1)\delta_{w_0d_u^{-1},d_{u'}w_0,z_1w_0}\,|\, u,u'\in W_0\}.$$
Then we have $\mu(y,w)\le B$. The theorem is proved.
\end{proof}

{\bf Remark:} When both $y$ and $w$ are in the left cell $\Gamma_0$,
the theorem is proved in \cite[\S7]{CPS2} by using representation
theory. Keeping the same assumption $y,w\in\Gamma_0$, a referee
pointed out that another proof
 of Theorem 3.1 can be obtained by observing that
the coefficients $\mu(y,w)$ are also the ``leading" coefficients of
the ``inverse" Kazhdan-Lusztig polynomials. Generically there are
only finitely many of these (according to \cite[Corollary 11.9]{L1})
and the non-generic ones are obtained by taking alternating sums
(see e.g. \cite[Theorem 2.2]{K}).

\subsection*{3.2} For the rest
of this section we assume that $G=SL_{n+1}(\bold C)$ and $W$ is the
corresponding extended affine Weyl group.

We number the simple roots $\alpha_1,\alpha_2,...,\alpha_n$ and the
simple reflections $s_0,s_1,...,s_n$ of $W$ as usual. Denote by
$x_1,...,x_n$ the fundamental weights. Let $\omega\in W$ be such
that $\omega s_i\omega^{-1}=s_{i+1}$ (we set $s_{n+1}=s_0$). Then
$x_1=\omega^{n} s_2\cdots s_ns_0$ and $x_n=\omega
s_{n-1}s_{n-2}\cdots s_1s_0$ and $x_1x_n=s_1s_2\cdots
s_{n-1}s_ns_{n-1}\cdots s_1s_0$. We also have
$$x_2=\omega^{n-1}s_4s_5\cdots s_ns_0s_1s_3s_4\cdots s_ns_0$$ and
$$x_{n-1}=\omega^{2}s_{n-3}s_{n-2}s_{n-4}s_{n-3}\cdots
s_0s_1s_ns_0.$$

\renewcommand{\thetheorem}{3.3}
\begin{theorem} Let $G=SL_{n+1}(\bold C)$ and let $W$ be the
corresponding extended affine Weyl group. Let
$$v=s_1s_ns_0s_2s_3\cdots s_{n-2}s_{n-1}s_{n-2}\cdots
s_2s_1s_ns_0$$ and let $x=\prod_{i=1}^nx_i^{a_i}\in X^+$ be a
dominant weight such that all $a_i\ge 2$. Then $\mu(xw_0,vxw_0)=n+2$
when $n\ge 4$.
\end{theorem}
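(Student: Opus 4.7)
The plan is to specialize the argument from the proof of Theorem~3.1 to $y = xw_0$ and $w = vxw_0$. First I would verify that $vxw_0 \in \Gamma_0 = c_{0,e}$: since the hypothesis $a_i \ge 2$ keeps $x$ well inside the dominant chamber, one should have $l(vxw_0) = l(vx) + l(w_0)$, placing $vxw_0$ in the same left cell as $xw_0$. I would then rewrite $vx = d_{u'} x'$ in the unique normal form with $u' \in W_0$ and $x' \in X^+$ coming from the semidirect product decomposition $W = W_0 \ltimes X$. By writing $v$ itself as a product of a translation and an element of $W_0$, one expects the translation contribution absorbed into $x'$ to be close to $2(x_1 + x_n)$ (one from each $s_0$ appearing in $v$), and $u'$ to be controlled by the central Weyl block $s_2 s_3 \cdots s_{n-1} \cdots s_2 s_1$.

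Because $u' \ne e$, the elements $xw_0$ and $vxw_0$ sit in different right cells, so the argument from the proof of Theorem~3.1 applies verbatim: the $\gamma$-part of Springer's formula vanishes and we obtain
$$\mu(xw_0,\, vxw_0) \;=\; \sum_{z_1 \in X^+} m_{x^*,\, x',\, w_0 z_1^{-1} w_0}\; \delta_{w_0,\, d_{u'} w_0,\, z_1 w_0}.$$
The heart of the proof is then to isolate the (finite) set of $z_1 \in X^+$ for which $\delta_{w_0, d_{u'} w_0, z_1 w_0} \ne 0$ and to compute each such $\delta$. This is a purely Coxeter-theoretic calculation about the top-degree part of $C_{w_0} C_{d_{u'} w_0}$ inside the lowest two-sided cell $c_0$, and I expect it to be extracted as a separate lemma (presumably the Lemma~3.4 referenced in the footnote from the introduction). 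This is the main technical obstacle, because even with $u'$ explicitly in hand the contributing $z_1$'s and the signs of the resulting $\delta$'s need to be determined directly.

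Having pinned down the contributing $z_1$'s, the last step is to compute, for each of them, the tensor product multiplicity $m_{x^*, x', w_0 z_1^{-1} w_0} = \dim \Hom_G(V(x^*) \otimes V(x'), V(w_0 z_1^{-1} w_0))$. The hypothesis $a_i \ge 2$ puts us safely in the stable range, where each multiplicity depends only on the difference between the weights involved and can be read off essentially without computation. Combining these multiplicities with the previously determined $\delta$-values is then expected to yield $\mu(xw_0, vxw_0) = n+2$. The restriction $n \ge 4$ should enter at this stage, to guarantee that the outer $s_1 s_n s_0$ factors and the inner block $s_2 \cdots s_{n-1} \cdots s_2 s_1$ involve sufficiently disjoint simple reflections so that the counted multiplicities do not collapse and the final count is exactly $n+2$.
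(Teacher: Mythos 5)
Your reduction is the same one the paper uses: Springer's formula plus the structure of the lowest two-sided cell turns $\mu(xw_0,vxw_0)$ into $\sum_{z_1\in X^+}m_{x^*,x,w_0z_1^{-1}w_0}\,\delta_{w_0,vw_0,z_1w_0}$, with the $\gamma$-part killed because $xw_0$ and $vxw_0$ lie in different right cells. (One small correction to your set-up: the translation part of $v$ is $\varpi_2+\varpi_{n-1}$, absorbed into $d_{u'}$ with $u'=s_{\alpha_2+\cdots+\alpha_{n-1}}$, so that in fact $x'=x$ exactly; it is not ``close to $2(x_1+x_n)$''.) But the reduction is essentially Theorem 3.1's proof specialized, and everything that makes the statement an equality $=n+2$ rather than a formal identity is exactly what you defer. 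Concretely, two computations are missing. First, you must show that only $z_1=e$ and $z_1=x_1x_n$ contribute, and evaluate the two $\delta$'s. The paper gets the finiteness by a degree bound: $\delta_{w_0,vw_0,z_1w_0}\ne0$ forces $l(z_1)\le l(v)+1=2n+2$, and since $z_1$ must lie in the root lattice this leaves only $e$ and the highest root $x_1x_n$. Then $\delta_{w_0,vw_0,x_1x_nw_0}=1$ comes from the covering relation $x_1x_nw_0\le vw_0$ with $l(vw_0)=l(x_1x_nw_0)+1$, while $\delta_{w_0,vw_0,w_0}=2$ is Lemma 3.4 — a genuinely delicate recursive Kazhdan--Lusztig computation occupying a page, which cannot be waved at as ``a separate lemma''; it is where the constant $2$ in $n+2$ comes from, and it is also where the hypothesis $n\ge4$ is actually used (e.g.\ to bound $\deg P_{w_0,v_{ij}}$ by $\frac{n-1}{2}$), not in the multiplicity step as you suggest.

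Second, the multiplicity computation is not quite ``read off without computation'': you need $m_{x_1x_n,x,x}=\dim V(x_1x_n)_e=n$, the zero weight space of the adjoint representation, and the hypothesis $a_i\ge2$ enters precisely here, to guarantee $m_{x_1x_n,x,x\lambda}=\dim V(x_1x_n)_\lambda$ for the relevant weights $\lambda$. The final count is then $n\cdot1+1\cdot2=n+2$. As it stands your text is a correct outline of the paper's strategy with its two load-bearing calculations left unproved, so it does not yet constitute a proof.
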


\begin{proof} From 3.2 we see that $y=x_1x_nw_0\le vw_0$ and $l(vw_0)=l(y)+1$. So we have $\mu(y,vw_0)=1$. Let
$$w=s_{n-1}s_{n-2}\cdots s_2s_1 s_ns_{n-1}\cdots s_2s_4s_5\cdots s_n s_3s_4\cdots s_{n-1}s_1s_n.$$
Then  $R(w)=\{s_1,s_2,s_{n-1},s_{n}\}$ and $
wx_1x_2x_{n-1}x_n=s_0v=d_w$. Thus we have $v=s_0d_w\le d_w$. So
$vw_0v^{-1}$ is a distinguished involution (see 2.1). Noting that
$a(c_0)=l(w_0)$ and $l(v)=l(x_1x_n)+1$, we see that if
$\delta_{w_0,vw_0,xw_0}\ne 0$ for an $x\in X^+$, then
$l(x)+l(w_0)+a(w_0)-1\le l(w_0)+l(vw_0)$. So $l(x)\le l(v)+1=2n+2$.
Noting that $x$ is in the root lattice, we must have $x=x_1x_n$ or
$x=e$ (the neutral element). By 1.3, 2.2 (a) and 2.2 (b), we know
that $\delta_{w_0,vw_0,x_1x_nw_0}=1$. A direct computation shows
that $\delta_{w_0,vw_0,w_0}=2$ (see Lemma 3.4 below). Now let $x\in
X^+$, then we have $\mu(xw_0,vxw_0)=\sum_{z_1\in
X^+}m_{x^*,x,w_0z_1^{-1}w_0}\delta_{w_0,vw_0,z_1w_0}$
$=m_{x^*,x,x_1x_n}+2=m_{x_1x_n,x,x}+2$. When
$x=\prod_{i=1}^nx_i^{a_i}$ with all $a_i\ge 2$, we have
$m_{x_1x_n,x,x'}=0$ if $x'x^{-1}$ is not a weight of $V(x_1x_n)$,
and $m_{x_1x_n,x,x\lambda}=\dim V(x_1x_n)_\lambda$, the dimension of
the $\lambda$-weight space of $V(x_1x_n)$. In particular, when
$\lambda=e$, the neutral element, we get $m_{x_1x_n,x,x}=\dim
V(x_1x_n)_e=n$, the dimension of the maximal torus of the Lie
algebra $sl_{n+1}(\bold C)$. The theorem is proved.
\end{proof}

\renewcommand{\thetheorem}{3.4}
\begin{lemma} Let $v$ be as in Theorem 3.3. If $n\ge 4$ then
$$\mu(w_0,vw_0)=\delta_{w_0,vw_0,w_0}=2.$$
\end{lemma}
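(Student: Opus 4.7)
The plan is to split the statement into the equality $\mu(w_0, vw_0) = \delta_{w_0, vw_0, w_0}$ and the value $\delta_{w_0, vw_0, w_0} = 2$, and to handle them in turn.

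For the equality, I would specialize the identity derived inside the proof of Theorem~3.3,
$$\mu(xw_0, vxw_0) = \sum_{z_1 \in X^+} m_{x^*,x,w_0 z_1^{-1} w_0}\, \delta_{w_0, vw_0, z_1 w_0},$$
to $x = e$. Since $m_{e,e,z_1} = \delta_{z_1,e}$, only the summand $z_1 = e$ survives. Alternatively, the equality follows from Springer's formula in 1.3 and the cell bounds in 1.4, using the fact from 2.1 that $w_0$ is the unique distinguished involution in the left cell $\Gamma_0$ and in its inverse right cell $\Gamma_0^{-1}$; the $\gamma$-sum is eliminated by parallel cell reasoning.

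For the value, $\delta_{w_0, vw_0, w_0}$ is the coefficient of $q^{(l(w_0)-1)/2}$ in $h_{w_0, vw_0, w_0}$, equivalently of $C_{w_0}$ in $C_{w_0} C_{vw_0}$. The idea is to exploit the reduced expression $v = s_1 s_n s_0 u s_0$ with $u = s_2 s_3 \cdots s_{n-1} s_{n-2} \cdots s_2 s_1 s_n \in W_0$, together with the identity
$$C_{s_0} C_{vw_0} = C_{d_w w_0} + \sum_{s_0 z < z,\ z < vw_0} \mu(z, vw_0)\, C_z,$$
which holds because $s_0 \cdot vw_0 = d_w w_0$ has length $l(vw_0)+1$. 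Since $C_{w_0} C_s = (q^{1/2}+q^{-1/2}) C_{w_0}$ for every $s \in S \cap W_0$, only the two occurrences of $s_0$ in $v$ produce nontrivial interactions in $C_{w_0} C_{vw_0}$. Moreover, the length-and-root-lattice analysis used in the proof of Theorem~3.3 shows that, for $n \ge 4$, the only dominant weights $x$ lying in the root lattice with $l(x) \le 2n+2$ are $x = e$ and $x = x_1 x_n$; so, modulo terms irrelevant to our degree, $C_{w_0} C_{vw_0}$ reduces to $h_{w_0,vw_0,w_0}\, C_{w_0} + h_{w_0,vw_0,x_1 x_n w_0}\, C_{x_1 x_n w_0}$, from which $h_{w_0, vw_0, w_0}$ can be extracted.

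The main obstacle is the sub-leading bookkeeping. The expectation is that each of the two factors of $s_0$ in $v$ contributes $+1$ to $\delta_{w_0, vw_0, w_0}$, in parallel with $\mu(x_1 x_n w_0, vw_0) = 1$ (which arises from the unit-step cover $l(vw_0) = l(x_1 x_n w_0)+1$). Verifying this boils down to a finite case analysis: enumerate those $z < vw_0$ with $s_0 \in L(z)$ and $\mu(z, vw_0) \ne 0$, propagate them through the iterative expansion of $C_{w_0} C_{vw_0}$, and check that the net $q^{(l(w_0)-1)/2}$ contribution to the $C_{w_0}$-coefficient is exactly $2$, with no spurious cancellation coming from the $C_{x_1 x_n w_0}$ term.
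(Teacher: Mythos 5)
Your reduction of the first equality is fine: specializing the identity $\mu(xw_0,vxw_0)=\sum_{z_1}m_{x^*,x,w_0z_1^{-1}w_0}\,\delta_{w_0,vw_0,z_1w_0}$ to $x=e$, or equivalently invoking Springer's formula together with the fact that $w_0$ and $vw_0$ lie in different right cells (so all $\gamma$-terms vanish and only the distinguished involution $w_0$ survives), does give $\mu(w_0,vw_0)=\delta_{w_0,vw_0,w_0}$. This part is consistent with, and not circular with respect to, the proof of Theorem 3.3.

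The genuine gap is in the second half: the evaluation $\delta_{w_0,vw_0,w_0}=2$ \emph{is} the content of the lemma, and you have only described a plan (``enumerate\dots propagate\dots check'') rather than executed it. Worse, the guiding heuristic --- that each of the two occurrences of $s_0$ in $v$ contributes $+1$ --- is not how the number $2$ actually arises, and nothing in your outline justifies it. The paper computes $\mu(w_0,vw_0)$ as the coefficient of $q^n$ in $P_{w_0,vw_0}$ by running the Kazhdan--Lusztig recursion \emph{inside the left cell} $\Gamma_0$ (legitimate because $c_0$ is the lowest two-sided cell), peeling off $s_1$ and then $s_n$ from the left of $v$. This requires an explicit enumeration of all $z\in\Gamma_0$ with $w_0\le z<v_1w_0$ and $s_1z<z$ (the elements $w_{ij},u_{ij},v_{ij},w_i,u_j,s_0w_0,w_0$) and a case-by-case verification that every correction term $\mu(z,\cdot)q^{\frac12(\cdot)}P_{w_0,z}$ falls below degree $n$; several of these cases genuinely use $n\ge 4$. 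One is then left with $(1+q)^2P_{s_0w_0,z_{n-1}}$ up to lower-order terms, and the recursion (5) for $P_{s_0w_0,z_i}$ shows that the top coefficient behaves as $q^i+2q^{i-1}+\cdots$ for $n>i+1$ but degenerates to $2q^{i-1}+\cdots$ exactly at $i=n-1$, because a $\mu$-correction kills the would-be leading term at the boundary. The final answer $2$ thus comes from a delicate cancellation seeded by the hand computation $P_{s_0w_0,z_2}=1+2q+q^2$, not from a count of $s_0$'s. Without carrying out this (or an equivalent) finite but nontrivial analysis --- including ruling out ``spurious cancellation,'' which is precisely where the real danger lies --- your argument does not establish the value $2$. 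Separately, your proposed mechanism for extracting $h_{w_0,vw_0,w_0}$ from $C_{w_0}C_{vw_0}$ via $C_{s_0}C_{vw_0}=C_{d_ww_0}+\cdots$ conflates left multiplication (which moves $vw_0$ up to $d_ww_0$) with the decomposition of $C_{vw_0}$ as a product of $C_s$'s minus corrections; the corrections are exactly the terms you would need to control, and the degree bound you quote from Theorem 3.3 constrains only which $\delta$'s can be nonzero, not the sub-leading coefficients you need.
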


\begin{proof} Note that $c_0$ is the lowest two-sided cell and
$\Gamma_0$ is a left cell in $c_0$. This implies that for
$y,w\in\Gamma_0$ with $y\le w$, we have
$$P_{y,w}=q^aP_{sy,sw}+q^{1-a}P_{y,sw}-\sum_{\st {\st {z\in\Gamma_0}{y\le
z<sw}}{sz<z}}\mu(z,sw)q^{\frac12(l(w)-l(z))}P_{y,z},$$ where $s\in
S$ such that $sw\le w$, $a=0$ if $sy\le y$ and $a=1$ if $sy\ge y$.

We apply this formula to compute $\mu(w_0,vw_0)$.

Let $v_1=s_1v$. Note that $P_{sy,w}=P_{y,w}$ if $sw\le w$ for $s\in
S$. We have
$$P_{w_0,vw_0}=(1+q)P_{w_0,v_1w_0}-\sum_{\st {\st {z\in\Gamma_0}{w_0\le
z<v_1w_0}}{s_1z<z}}\mu(z,v_1w_0)q^{\frac12(l(vw_0)-l(z))}P_{w_0,z}.$$

We claim that

\noindent(1) If $z\in\Gamma_0,\ {w_0\le z<v_1w_0}$ and ${s_1z<z}$,
then the degree of the polynomial
$\mu(z,v_1w_0)q^{\frac12(l(vw_0)-l(z))}P_{w_0,z}$ is less than
$\frac12(l(vw_0)-l(w_0)-1)=n$.

Note that $v_1=s_ns_0s_{n-1}s_{n-2}\cdots s_2s_1 s_3s_4\cdots
s_ns_0$. Thus  if $z\in\Gamma_0,$\ ${w_0\le z<v_1w_0}$, then $z$ is
one of the following elements:

\noindent$w_{ij}=s_ns_0s_is_{i-1}\cdots s_2s_1s_js_{j+1}\cdots
s_ns_0w_0$,\quad $1\le i\le n-1,\ 3\le j\le n$;

\noindent$u_{ij}= s_0s_is_{i-1}\cdots s_2s_1s_js_{j+1}\cdots
s_ns_0w_0$,\quad $1\le i\le n-1,\ 3\le j\le n$;

\noindent$v_{ij}=s_is_{i-1}\cdots s_2s_1s_js_{j+1}\cdots
s_ns_0w_0$, \qquad$1\le i\le n-1,\ 3\le j\le n$;

\noindent$w_{i}=s_is_{i-1}\cdots s_2s_1s_0w_0$, \qquad$1\le i\le
n-1;$

\noindent$u_{j}=s_js_{j+1}\cdots s_ns_0w_0$, \qquad$2\le j\le n$;

\noindent $s_0w_0, \ w_0$.

Note that $s_iv_1w_0\le v_1w_0$ for $i=2,3,...,n-2$. If $i\ge 2$,
then $s_1w_{ij}\ge w_{ij}$. If $n=1,$ then $s_2w_{ij}\le w_{ij}$,
thus $\mu(w_{ij},v_1w_0)=0$.  (We also have: when $n-2\ge i$ and
$j=n$, then $w_{ij}$ is not in $\Gamma_0$;  if $i\le n-3$, then
$s_{i+1}w_{ij}\ge w_{ij}$, so $\mu(w_{ij},v_1w_0)=0$.)

Now we consider $v_{ij}$. When $i\le n-3$, we have
$s_{i+1}v_{ij}\ge v_{ij}$, so $\mu(v_{ij},v_1w_0)=0$.  If $i=n-2$
and $4\le j\le n-1$, then $s_{j-2}v_{ij}\ge v_{ij}$, so
$\mu(v_{ij},v_1w_0)=0$. If $i=n-2$ and $j=3$, then $s_1v_{ij}\ge
v_{ij}$. If $i=n-2$ and $j=n$, then the degree of $P_{w_0,v_{ij}}$
is 1, less than $\frac12(l(v_{ij}-l(w_0)-1)=\frac{n-1}2$ since
$n\ge 4$. If $i=n-1$ and $4\le j\le n$, then $s_{j-2}v_{ij}\ge
v_{ij}$, so $\mu(v_{ij},v_1w_0)=0$. If $i=n-1$ and $j=3$, then
$s_1v_{ij}\ge v_{ij}$.

We have $\mu(w_0,u_{ij})=0$ since $s_0u_{ij}\le u_{ij}$ and
$s_0w_0\ge w_0$ and $l(u_{ij})-l(w_0)>1$. We have
$\mu(w_i,v_1w_0)=0$ since $s_nv_1w_0\le v_1w_0$ and $s_nw_i\ge w_i$
and $l(v_1w_0)-l(w_i)>1$. Note that $s_1u_j\ge u_j$ and
$s_1s_0w_0\ge s_0w_0$. Also we have $\mu(w_0,v_1w_0)=0$ since
$l(v_1)$ is even.

Thus we have shown that  statement (1) is true.

Let $v_2=s_nv_1$.  Note that $P_{w_0,v_2w_0}=P_{s_0w_0,v_2w_0}.$
We have
$$P_{w_0,v_1w_0}=(1+q)P_{s_0w_0,v_2w_0}-\sum_{\st {\st {z\in\Gamma_0}{w_0\le
z<v_2w_0}}{s_nz<z}}\mu(z,v_2w_0)q^{\frac12(l(v_1w_0)-l(z))}P_{w_0,z}.$$

Using a similar argument for (1) we see that

\noindent(2) If $z\in\Gamma_0,\ {w_0\le z<v_2w_0}$ and ${s_nz<z}$,
then the degree of the polynomial
$\mu(z,v_2w_0)q^{\frac12(l(v_1w_0)-l(z))}P_{w_0,z}$ is less than
$\frac12(l(v_1w_0)-l(w_0)-2)=n-1$.

Let $z_i=s_0s_is_{i-1}\cdots s_1 s_3s_4\cdots s_ns_0w_0$. By a
direct computation, we get

\noindent(3) $P_{s_0w_0,z_1}=P_{s_1s_0w_0,z_1}=1+q$ and
$P_{s_0w_0,z_2}=1+2q+q^2.$

Let $u_2=s_2s_3\cdots s_ns_0w_0$, by a direct computation we get

\noindent(4) $P_{u_2,z_i}=1+q$ if $i\ge 3$ and $P_{u_2,z_2}=1$.

Using this it is not difficult to get the following formula.

\noindent(5) $P_{s_0w_0,z_i}=(1+q)P_{s_0w_0,z_{i-1}}-
qP_{s_0w_0,z_{i-2}}-\mu(s_0w_0,z_{i-1})q^{\frac12(n+i-1)}-\mu(u_2,z_{i-1})q^{\frac12i}.$

Thus we have
$$P_{s_0w_0,z_i}=\begin{cases} q^i+2q^{i-1}+\text{lower degree terms\quad
if }n>i+1\\ 2q^{i-1}+\text{lower degree terms\qquad if
}n=i+1.\end{cases}$$

Note that $v_2w_0=z_{n-1}$. Thus, we have
$P_{s_0w_0,v_2}=2q^{n-2}$+lower degree terms. Now using (1) and (2)
we see that the lemma is true.
\end{proof}

\medskip
\section{Some consequences}

In this section we shall assume that $G$ is simply connected and
simple. We shall write the operation of $X$ additively. For $x\in
X$, denote by $t_x$ the translation $y\to y+x$ of $X$. Let
$\alpha_0$ be the highest short root of $R$. Set
$s_0=s_{\alpha_0}t_{p\alpha_0}$ (recall that we use $s_\alpha$ for
the reflection on $E=X\otimes \bold R$ corresponding to $\alpha\in
R$). Let $W'$ be the subgroup of $GL(E)$ generated by all $s_\alpha$
($\alpha\in R$) and $s_0$. Then $W'$ is an affine Weyl group and is
isomorphic to  the group $W_0\ltimes pQ$, where $Q$ stands for the
root lattice. Here $p$ could be any positive (or negative) integer,
but it is convenient here to suppose $p$ is a fixed prime.

We shall further identify $W'$ with the affine Weyl group $W$
defined in \cite[section 1.1]{L1} and let $W'$ act on $X$ through
the affine Weyl group defined in loc.cit. We denote this action by
$*$. Then in terms of this new action, for $w\in W'$, we have
$w*(-\rho)=w^{-1}(-\rho)$, and $w$ is in $\Gamma_0$ if and only if
$w*(-\rho)-\rho$ is dominant, where $\rho$ is the sum of all
fundamental weights. Note that $w*(-\rho)=w^{-1}(-\rho)$ is just a
fact; it does not imply that $w*(u*(-\rho))=w^{-1}(u*(-\rho))$ for
$w,u$ in $W'$.

Now let $G=SL_{n+1}(\bold C)$ and we number the simple reflections
of $W_0$ as usual. Let $v$ be as in Theorem 3.3 and
$\beta=\alpha_2+\cdots+\alpha_{n-1}$. Then we have $v=s_{\beta}
t_{p\varpi_2+p\varpi_{n-1}}$ (we use $\varpi_i$ for the fundamental
weight corresponding to the simple root $\alpha_i$). Let
$\lambda=t_{2p\rho}w_0*(-\rho)-\rho=2p\rho$ and
$\mu=vt_{2p\rho}w_0*(-\rho)-\rho$. We have
$\mu=w_0t_{-2p\rho}t_{-p\varpi_2-p\varpi_{n-1}}s_\beta
(-\rho)-\rho$=$2p\rho+(n-2)(\varpi_1+\varpi_{n})+(p-n+2)(\varpi_2+\varpi_{n-1})$.
We have $\langle\lambda+\rho,\alpha_0^\vee\rangle=2pn+n$ and
$\langle\mu+\rho,\alpha_0^\vee\rangle=2pn+2p+n.$ The Jantzen region
is defined to be the set of all vectors $\nu$ with $0\le
\langle\nu+\rho,\alpha_0^\vee\rangle\le p(p-h+2)$, where $h$ is the
Coxeter number. For $G=SL_{n+1}(\bold C)$, we know $h=n+1$. Thus if
$p\ge 3n+2$, then $p(p-h+2)=p(p-n+1)\ge 2pn+3p> 2pn+2p+n.$ Thus both
$\lambda$ and $\mu$ are in the Jantzen region.

Now replace $\bold C$ with an algebraically closed field $k$ of
characteristic $p$ and let $H$ be  a simply connected and simple
algebraic group  over  $k$. It is known for each root system that
when $p$ is sufficiently large, Lusztig's conjecture for modular
representations of algebraic groups (Lusztig's modular conjecture in
short, see \cite{L0} for the formulation) is true for irreducible
modules of $H$ with highest weight in the Jantzen region.

For $H=SL_{n+1}(k)$, by Theorem 3.3 we know that the Kazhdan-Lusztig
coefficient $\mu(t_{2p\rho}w_0,vt_{2p\rho}w_0)$ of
$P_{t_{2p\rho}w_0,vt_{2p\rho}w_0}$ is $n+2$ if $n\ge 4$. It is known
that the coefficient is related to the dimension of the first
extension groups for extensions between certain irreducible modules
for $H$. See for example \cite{CPS2} and references
therein.\footnote{It is useful to note, in comparing notation, that
$P_{y,w}=P_{y^{-1},w^{-1}}$, and so
$\mu({y,w})=\mu({y^{-1},w^{-1}})$, for all $y<w$ in W. See
\cite{KL}.} So we conclude that the dimension goes to infinity when
$n$ increases.

\noindent{\bf Remark:} We do not know if the quantities $\mu(w_0,w)$
that correspond to 1-cohomology dimensions can go to infinity. We do
have some plausible candidates, though. Let $\omega\in W_0\ltimes
pX$ be such that $\omega(s_i)=s_{i+1}$ for any $0\le i\le n$ (set
$s_{n+1}=s_0$) for any $0\le i\le n$. Assume that $n=2k-1$ is odd.
Then $w=s_k\omega^kt_{-p\rho}$ is in $W'\cap \Gamma_0$, and the
weight $w*(-\rho)-\rho$ is $p$-restricted. The explicit form of
$w*(-\rho)-\rho$ is $(p-2)\rho -\alpha_0 - (p-n-1)\varpi_k$. For
$n=5$ and $p$ large, this is the same weight which gave 1-cohomology
dimensions  of 3 with irreducible coefficients in \cite{S} (found
there by computer calculations
 of Kazhdan-Lusztig polynomials).  For general odd $n>1$,
the weight is in the second top alcove of the fundamental $p$-box
$\{ v\in X\otimes\mathbf{R}\ |\ 0<\langle
v+\rho,\alpha_i^\vee\rangle<p,\ 1\le i\le n\}$. If $n>1$ is odd,
then $l(t_{-p\rho})-l(w_0)$ is even, so $\mu(w_0,t_{-p\rho})=0$.
Thus $w*(-\rho)-\rho$ is the largest possible $p$-restricted weight
in the orbit $W*(-\rho)-\rho$ whose corresponding irreducible module
could have nonzero 1-cohomology. For $n=4k\ge 4$, set
$w=s_0t_{-p\rho}$. Then $w*(-\rho)-\rho=(p-2)\rho-(p-n)\alpha_0$ is
also the largest possible $p$-restricted weight whose corresponding
irreducible module could have nonzero 1-cohomology. In fact, when
$n=4$, $w=s_0t_{-p\rho}$ is just the $vw_0$ in Lemma 3.4. The
question is whether $\mu(w_0,w)$ goes to infinity when $k$
increases? (When $n=4k+2$, we have not found a similar candidate.)

\medskip
\def\l{\lambda}

\section{Representation-theoretic argument}

\setcounter{equation}{0} \setcounter{theorem}{0}
\setcounter{remark}{0}

The aim of this section is to prove a version of Theorem 3.3 by a
representation-theoretic argument. The result is weaker, in that we
do not exactly compute the relevant Kazhdan-Lusztig coefficents,
 but only get a good lower bound. However, the hypotheses required are also somewhat weaker.
 As in $\S$4, we will assume $p$ is sufficiently large so that the Lusztig modular conjecture holds for
 the group $G = SL_{n+1}(k)$, with $k$ an algebraically closed field of characteristic $p$.
 We also require $p \geq 3n+2$, as in $\S$4, at least to begin our discussion.
  This assumption was used in $\S$4 to ensure the weights $\lambda$ and $\mu$ there were in
  the Jantzen region. Eventually, in this section, we will be able to obtain a good lower bound for more weights,
  and we will only need that $p \geq 3n$ to enable some of them to be in the Jantzen region.
  This latter inequality, fortunately, also ensures that a formula of Andersen may be applied.

\subsection{Andersen's Formula}For dominant weights $\lambda = \lambda_0 + p \lambda_1$ and $\mu = \mu_0 + p \mu_1$ with $\lambda_0 \neq \mu_0$ both
restricted and $\lambda_1$, $\mu_1$ both dominant, the formula asserts (for any connected semisimple group) that

\renewcommand{\thetheorem}{5.1}
\begin{theorem}
Assume $p \geq 3h-3$ (which is $3n$ for type $A_n$).  Then
$$\mbox{{\textrm{dim Ext}}}^1_G(L(\lambda),L(\mu))$$
$$ =  \sum_\nu \mbox{\textrm{dim Ext}}^1_G(L(\lambda_0 + p \nu),L(\mu_0)) \mbox{\textrm{dim Hom}}_G(L(\lambda_1), L(\nu) \otimes L(\mu_1)).
$$
\end{theorem}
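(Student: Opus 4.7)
The plan is to combine Steinberg's tensor product theorem with the Lyndon--Hochschild--Serre spectral sequence for the Frobenius kernel $G_1\trianglelefteq G$, reducing the problem to a structural result of Andersen about $\mathrm{Ext}^1_{G_1}$ viewed as a $G/G_1$-module; the hypothesis $p\ge 3h-3$ will enter only in that last step.

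First I would apply Steinberg to write $L(\lambda)=L(\lambda_0)\otimes L(\lambda_1)^{(1)}$ and $L(\mu)=L(\mu_0)\otimes L(\mu_1)^{(1)}$, and use the tensor--Hom adjunction to obtain
$$\mathrm{Ext}^1_G(L(\lambda),L(\mu))\;\cong\;\mathrm{Ext}^1_G\bigl(L(\lambda_0),\,L(\mu_0)\otimes V\bigr),\qquad V:=\bigl(L(\mu_1)\otimes L(\lambda_1)^*\bigr)^{(1)},$$
where $V$ is $G_1$-trivial. The Hochschild--Serre spectral sequence for $G_1\trianglelefteq G$ then has $E_2^{p,q}=H^p(G/G_1,\mathrm{Ext}^q_{G_1}(L(\lambda_0),L(\mu_0))\otimes V)$, and because $\lambda_0\ne\mu_0$ are both restricted the simple $G_1$-modules $L(\lambda_0)$ and $L(\mu_0)$ are non-isomorphic, so $\mathrm{Hom}_{G_1}(L(\lambda_0),L(\mu_0))=0$ and hence $E_2^{p,0}=0$ for all $p$. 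The five-term exact sequence collapses to
$$\mathrm{Ext}^1_G(L(\lambda),L(\mu))\;\cong\;H^0\bigl(G/G_1,\;\mathrm{Ext}^1_{G_1}(L(\lambda_0),L(\mu_0))\otimes V\bigr).$$

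The hard step will be the analysis of $E:=\mathrm{Ext}^1_{G_1}(L(\lambda_0),L(\mu_0))$ as a $G/G_1$-module, equivalently (via Frobenius untwist) of a $G$-module $\widetilde E$ with $E=\widetilde E^{(1)}$. Running the preceding Hochschild--Serre argument in reverse and using the Steinberg identity $L(\lambda_0+p\nu)=L(\lambda_0)\otimes L(\nu)^{(1)}$ yields the formal multiplicity identity
$$\dim\mathrm{Hom}_G\bigl(L(\nu),\widetilde E\bigr)\;=\;\dim\mathrm{Ext}^1_G\bigl(L(\lambda_0+p\nu),L(\mu_0)\bigr);$$
the genuine content of Andersen's theorem, and the only place where the bound $p\ge 3h-3$ is used, is the \emph{semisimplicity} of $\widetilde E$ as a $G$-module, which promotes this identity into the isomorphism
$$\widetilde E\;\cong\;\bigoplus_{\nu}\mathrm{Ext}^1_G\bigl(L(\lambda_0+p\nu),L(\mu_0)\bigr)\otimes L(\nu).$$
Substituting this decomposition, untwisting Frobenius via $H^0(G/G_1,\widetilde E^{(1)}\otimes V)=H^0(G,\widetilde E\otimes L(\mu_1)\otimes L(\lambda_1)^*)$, pulling out the multiplicity spaces, and recognising $H^0(G,L(\nu)\otimes L(\mu_1)\otimes L(\lambda_1)^*)=\mathrm{Hom}_G(L(\lambda_1),L(\nu)\otimes L(\mu_1))$ then yields the stated formula upon taking dimensions.
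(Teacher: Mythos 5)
The paper itself offers no proof of Theorem 5.1: it is quoted as ``Andersen's formula'' with references to \cite[7.8]{CPS2} and \cite{A2}, so there is no internal argument to compare yours against. Your sketch is, in substance, the standard derivation found in those references. The formal steps are all correct: Steinberg's tensor product theorem together with tensor--Hom adjunction isolates the restricted parts and produces the $G_1$-trivial module $V=(L(\mu_1)\otimes L(\lambda_1)^*)^{(1)}$; the Lyndon--Hochschild--Serre spectral sequence for $G_1\trianglelefteq G$ has vanishing $E_2^{p,0}$ because $\lambda_0\neq\mu_0$ are restricted (so $L(\lambda_0)$ and $L(\mu_0)$ remain simple and non-isomorphic over $G_1$), giving $\mathrm{Ext}^1_G(L(\lambda),L(\mu))\cong H^0\bigl(G/G_1,\mathrm{Ext}^1_{G_1}(L(\lambda_0),L(\mu_0))\otimes V\bigr)$; and the multiplicity identity $\dim\mathrm{Hom}_G(L(\nu),\widetilde E)=\dim\mathrm{Ext}^1_G(L(\lambda_0+p\nu),L(\mu_0))$ follows by running the same spectral-sequence argument with $(L(\nu)^*)^{(1)}$ in place of $V$. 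The sum over the stated range of $\nu$ is also consistent, since the omitted $\nu$ contribute zero.

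The one thing to flag is that the entire mathematical weight of the theorem sits in the step you isolate but do not prove: the semisimplicity of the untwisted $G$-module $\widetilde E$ with $\widetilde E^{(1)}=\mathrm{Ext}^1_{G_1}(L(\lambda_0),L(\mu_0))$. That semisimplicity \emph{is} Andersen's theorem in \cite{A2} (proved by bounding the $T$-weights of the $G_1$-extension group and showing that for $p\geq 3h-3$ its composition factors admit no nontrivial extensions among themselves); everything else in your argument is formal bookkeeping. So as a self-contained proof your proposal is incomplete at exactly that point, but as a derivation of the displayed formula from Andersen's structural result it is correct and matches how the result is established in the cited sources.
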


Here $\nu$ ranges over all dominant weights satisfying $\lambda_0 +
p \nu \leq 2(p-1)\rho+w_0(\mu_0)$. For a more complete statement,
see \cite[7.8]{CPS2} or \linebreak \cite{A2}.

\subsection{A lower bound for some $\text{Ext}^1$ dimensions}
Recall that $\varpi_1,\ldots,\varpi_n$ are the   fundamental weights
(indexed corresponding to $\alpha_1, \ldots,\alpha_n$). Set $a =
n-2$ and put $\mu_0 = a( \varpi_1+ \varpi_n ) + (p-a)(\varpi_2+
\varpi_{n-1})$ (note that this is the dominant weight
$vw_0*(-\rho)-\rho$ corresponding to $vw_0$, see Section 4). Let
$\mu_1$ be any weight of the form $\sum^n_{i=1} a_i\varpi_i$ with
each $a_i \geq 2$, and put $\mu = \mu_0+p \mu_1$. (Thus, if $\mu_1$
is $2\rho$, this is the same $\mu$ as in $\S$4.) We will apply
Andersen's formula in  5.1 to $\mu$ and to $\lambda = p \mu_1$. The
following result is a weak version of Theorem 3.3, but still strong
enough to show $\mbox{dim Ext}^1_G(L(\lambda),L(\mu)) \rightarrow
\infty$ as $n \rightarrow \infty$.

\renewcommand{\thetheorem}{5.2}
\begin{theorem}
Let $\lambda, \mu$ be as above, and assume $n + \sum^n_{i=1} a_i
\leq p$ (which certainly holds if $\mu_1 = 2\rho$). Then
$$
\mbox{dim Ext}^1_G (L(\lambda), L(\mu)) \geq n.
$$
\end{theorem}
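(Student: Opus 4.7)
The plan is to apply Andersen's formula (Theorem 5.1) directly to $\dim\,\text{Ext}^1_G(L(\lambda),L(\mu))$ with Steinberg decompositions $\lambda_0 = 0$, $\lambda_1 = \mu_1$, and $\mu = \mu_0 + p\mu_1$. The hypotheses of the formula are met: $\mu_0$ is $p$-restricted (its largest coordinate, $p-n+2$, is at most $p-1$ for $n \geq 4$), and $\lambda_0 = 0 \neq \mu_0$. To obtain the bound $n$ it will suffice to exhibit a single dominant weight $\nu$ whose term
$$\dim\,\text{Ext}^1_G(L(p\nu),L(\mu_0))\cdot\dim\,\text{Hom}_G(L(\mu_1),L(\nu)\otimes L(\mu_1))$$
is already at least $n$, since the remaining terms in Andersen's sum are non-negative. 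I would take $\nu = \varpi_1 + \varpi_n$, the highest root $\alpha_0$ of type $A_n$, and bound each factor separately.

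For the first factor, both $p\nu$ and $\mu_0$ satisfy $\langle\,\cdot\, + \rho,\alpha_0^\vee\rangle = 2p+n$, so under $p \geq 3n+2$ both weights lie in the Jantzen region where Lusztig's modular conjecture applies. Translating via the $*$-action of Section 4, the weight $p\nu = p\alpha_0$ corresponds to $y = x_1 x_n w_0 \in \Gamma_0$, and Lusztig's conjecture then gives $\dim\,\text{Ext}^1_G(L(p\nu),L(\mu_0)) = \mu(y, vw_0)$. But the opening of the proof of Theorem 3.3 already records that $x_1 x_n w_0 \leq vw_0$ with $l(vw_0) = l(x_1 x_n w_0) + 1$, so (since $P_{y,w}=1$ whenever $y\leq w$ and $l(w)-l(y)=1$) this $\mu$-coefficient is exactly $1$.

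For the second factor, the hypothesis $n + \sum a_i \leq p$ places $\mu_1 + \rho$ in (the closure of) the fundamental $p$-alcove, so $L(\mu_1) = V(\mu_1) = \nabla(\mu_1)$ is simultaneously irreducible, Weyl, and dual-Weyl. Since $p \nmid n+1$, also $L(\nu) = \nabla(\nu)$ is the adjoint module. By Donkin's (or Mathieu's) theorem $\nabla(\nu)\otimes\nabla(\mu_1)$ has a good filtration, and in that filtration $\nabla(\mu_1)$ appears with multiplicity $\dim V(\nu)_0 = \dim\mathfrak{h} = n$ by the classical tensor-product formula (applicable since each $a_i \geq 2$ prevents the translated weights $\mu_1 + \xi$, $\xi$ a weight of $V(\nu)$, from leaving the dominant chamber). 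Using that $\text{Hom}_G(L(\mu_1),\nabla(\tau)) = \delta_{\mu_1,\tau}k$, we conclude $\dim\,\text{Hom}_G(L(\mu_1),L(\nu)\otimes L(\mu_1)) \geq n$, and multiplying by the first factor yields the theorem. The main obstacle will be aligning all conventions so that Lusztig's conjecture cleanly supplies the equality $\dim\,\text{Ext}^1 = \mu$ at the pair $(p\nu,\mu_0)$, together with the identification of $p\nu = p\alpha_0$ with $x_1 x_n w_0$ via the $*$-action; once these are in place the argument is the straightforward combination of the two inequalities above.
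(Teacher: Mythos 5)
Your proposal is correct, and it reaches the bound by the same overall strategy as the paper (Andersen's formula applied to the single term $\nu=\alpha_0=\varpi_1+\varpi_n$), but both factors are handled differently. For the $\mathrm{Ext}^1$ factor, the paper argues purely inside representation theory: Jantzen's sum formula \cite[II,6.24]{J} (via the count $d(\mu_0)=d(p\alpha_0)+1$) shows $L(p\alpha_0)$ occurs in $\nabla(\mu_0)$, the strong linkage principle makes $p\alpha_0$ maximal in $\nabla(\mu_0)/L(\mu_0)$, giving $\mathrm{Ext}^1_G(\Delta(p\alpha_0),L(\mu_0))\neq 0$, and only then is the Lusztig conjecture used, through \cite[Proposition 2.8]{A1}, to pass to $\mathrm{Ext}^1_G(L(p\alpha_0),L(\mu_0))\neq 0$. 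You instead invoke the full identification $\dim\mathrm{Ext}^1_G(L(y\cdot\lambda),L(w\cdot\lambda))=\mu(y,w)$ and the trivial computation $\mu(x_1x_nw_0,vw_0)=1$ from the length-difference-one observation at the start of the proof of Theorem 3.3. This is shorter and gives the exact value $1$, but it leans harder on the Kazhdan--Lusztig machinery that Section 5 is partly trying to sidestep (the stated purpose of the section is an independent, representation-theoretic confirmation); since the standing hypothesis is that the Lusztig conjecture holds and both weights are regular and in the Jantzen region, the step is nonetheless legitimate. For the $\mathrm{Hom}$ factor, your good-filtration argument (Donkin--Mathieu for $\nabla(\alpha_0)\otimes\nabla(\mu_1)$, Brauer's formula giving multiplicity $\dim V(\alpha_0)_0=n$ for $\nabla(\mu_1)$, and $\dim\mathrm{Hom}_G(\Delta(\mu_1),M)=[M:\nabla(\mu_1)]$ using $L(\mu_1)\cong\Delta(\mu_1)$ in the closure of the lowest alcove) is essentially the argument of Remark 5.3(d); the paper's proof instead computes the same multiplicity by Frobenius reciprocity, the explicit $B$-filtration of $\mathfrak{g}\otimes k(\mu_1)$ through $\mathfrak{b}\otimes k(\mu_1)$, and Kempf vanishing (which is where the hypothesis $a_i\geq 2$ enters in both versions, to keep $\mu_1-\beta$ dominant). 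The two computations are equivalent in substance, and your version is arguably cleaner, though the paper's $B$-module approach is what allows the later weakening to $a_i\geq 1$ in Remark 5.3(c).
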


\begin{proof}
First, we show $\mbox{\mbox{Ext}}^1_G(L(\lambda_0 + p
\alpha_0),L(\mu_0)) \neq 0$.  Here $\lambda_0$ is the zero weight,
and $\alpha_0 = \varpi_1+ \varpi_n$. In particular,
$$
L(\lambda_0 + p \alpha_0) \cong L(p \alpha_0),
$$
and
$$
\mbox{\mbox{Ext}}^1_G(L(\lambda_0+p \alpha_0),L(\mu_0)) \cong
\mbox{\mbox{Ext}}^1_G(L(p \alpha_0),L(\mu_0)).
$$
Observe that the weight $\mu_0$ is obtained from $p \alpha_0$ by
reflection in the hyperplane
$$
\left\{ x \in \mathbb{R}^n| \ \langle x + \rho,(\alpha_2 +\ldots+
\alpha_{n-1})^\vee \rangle = p\right\}.
$$
We can also directly calculate  the number $d(\mu_0)$ of hyperplanes
of the form $\left\{x \in \mathbb{R}^n| \ \langle x +
\rho,\alpha^\vee \rangle = mp\right\}$, with $\alpha > 0$ and $m \in
\mathbb{Z}$, which separate $\mu_0$ from 0, and compare $d(\mu_0)$
with $d(p \alpha_0)$. We find that $\mu_0$ and $p \alpha_0$ are on
opposite sides of only the hyperplanes defined by $\alpha =
\alpha_1$, $m =1$;
 $\alpha = \alpha_n$, $m = 1$;
 $\alpha = \alpha_0 - \alpha_1$, $m = 2$; $\alpha = \alpha_0-\alpha_n$,
 $m =2$; $\alpha = \alpha_0-\alpha_1 - \alpha_n$, $m =1$.
 For the first two of these five hyperplanes, the weight $\mu_0$ is on the same side as 0.
 For the last three, $p \alpha_0$ is on the same side as 0.  Thus, $d(\mu_0) = d(p \alpha_0)+1$.
 It now follows from \cite[II, 6.24]{J} that $L(p\alpha_0)$ is a composition factor of the costandard
 module $\nabla(\mu_0) = H^0(\mu_0)$.
  Also, the strong linkage principle implies that $p \alpha_0$ is maximal among the highest weights
  of composition factors of $\nabla(\mu_0)/L(\mu_0)$.
  Thus, there is a nonzero homomorphism $\Delta(p \alpha_0) \rightarrow \nabla (\mu_0)/L(\mu_0)$, and
  this shows $\mbox{Ext}^1_G(\Delta (p \alpha_0), L(\mu_0)) \neq 0$.  However, since $p$ is large enough that the Lusztig conjecture
  holds, and both $p \alpha_0$ and $\mu_0$ lie in the Jantzen region (we calculate
this using $p>n$), we have
$$
\mbox{Ext}^1_G(L(p \alpha_0),L(\mu_0)) \cong \mbox{Ext}^1_G(\Delta(p
\alpha_0),L(\mu_0))
$$
by a well-known result of Andersen \cite[Proposition 2.8]{A1}. (We
take the $\lambda, y, w ,s$ there to be $-2\rho, t_{p\alpha_0}w_0,
s_2vw_0,s_2$ respectively. Note that $s_2vw_0, s_2$ can be replaced
by $s_{n-1}vw_0,s_{n-1}$ respectively.) This proves the claim.

To prove the theorem, it is now sufficient by Andersen's formula in
  5.1, to show
$$
\mbox{dim Hom}_G(L(\alpha_0),L(\mu_1) \otimes L(\mu_1)) \geq n.
$$
Note that $ \alpha_0 $ and $ \mu_1 $ lie in the closure of the
lowest $p$-alcove. (This uses our inequality $\sum^n_{i=1} a_i + n
\leq p$.)  Thus
$$
L(\alpha_0) \cong \Delta (\alpha_0) \cong \nabla(\alpha_0), \quad
L(\mu_1) \cong \Delta(\mu_1) \cong \nabla (\mu_1),
$$
and
\begin{eqnarray*}
\mbox{Hom}_G(L(\alpha_0),L(\mu_1) \otimes L(\mu_1)) & \cong &
\mbox{Hom}_G(\Delta(\mu_1),\Delta(\alpha_0) \otimes \nabla(\mu_1))
\\[3mm]
& \cong & \mbox{Hom}_G(\Delta(\mu_1),\mbox{Ind}_B^G(\Delta(\alpha_0)
\otimes k(\mu_1)),
\end{eqnarray*}
where $B$ is the Borel subgroup associated to the negative roots,
and $k(\mu_1)$ is the one-dimensional $B$ module with weight
$\mu_1$. The module $\Delta(\alpha_0)$ is just the Lie algebra
$\mathfrak{g}$ of $G$, with the usual adjoint action. The Borel
subalgebra $\mathfrak{b}$ with negative root spaces is a
$B$-submodule, and $\mathfrak{b} \otimes k(\mu_1)$ is a
$B$-submodule of $\mathfrak{g} \otimes k(\mu_1) = \Delta(\alpha_0)
\otimes k(\mu_1)$. This $B$-submodule has a $B$-quotient which is a
direct sum of $n$ copies of $k(\mu_1)$ and all the other composition
factors have the form $k(\mu_1-\beta)$, where $\beta$ is a positive
root. Because of our assumption that all $a_i$ are at least 2, the
weights $\mu_1 - \beta$ are all dominant. (It is possible to carry
through a version of  the argument which follows with a requirement
weaker than "dominant", so it is actually enough that each $a_i$ be
at least 1. See 5.3(c) below.) Applying Kempf's vanishing theorem,
we find that $\mbox{Ind}^G_B(\mathfrak{b} \otimes k(\mu_1))$ has a
quotient isomorphic to a direct sum of $n$ copies of
$\nabla(\mu_1)$. The kernel of the map to this quotient is filtered
by modules $\nabla(\mu_1-\beta)$.  Since $\nabla(\mu_1) \cong
\Delta(\mu_1)$, and there are no nontrivial extensions of a standard
module by a costandard module, the quotient map is split. Since
$\mbox{Ind}^G_B(\mathfrak{b} \times k(\mu_1)) \subseteq
\mbox{Ind}^G_B(\Delta(\alpha_0) \otimes k(\mu_1))$, we have
\begin{eqnarray*}
\lefteqn{\hspace*{-20pt} \mbox{dim
Hom}_G(\Delta(\mu_1),\mbox{Ind}^G_B(\Delta(\alpha_0) \otimes
k(\mu_1))
} \\[2mm]
& \geq &\mbox{dim Hom}_G(\Delta(\mu_1),\mbox{Ind}^G_B(\mathfrak{b}
\otimes k(\mu_1))) \geq n.
\end{eqnarray*}
Taken with the isomorphisms and discussions above, this completes
the proof of the theorem.
\end{proof}

\subsection{Remarks}
(a) The assumption $n + \sum^n_{i=1} a_i \leq p$ is used only to
guarantee $L(\mu_1) \cong \Delta(\mu_1) \cong \nabla(\mu_1)$. In
\cite[\S 7]{CPS2} it is suggested that Andersen's formula should be
true, appropriately formulated (and with a similar proof) for
quantum enveloping algebras at a root of unity. In such a
formulation, the terms involving $\mbox{Hom}_G$ would instead
involve a Hom over the ordinary characteristic 0 enveloping algebra
of $\mathfrak{g}$. Thus, the required isomorphisms on $L(\mu_1)$
would hold without the assumed inequality. That is, Theorem 5.2
should hold at a $p^{th}$ root of unity without the assumed
inequality (if $p > n$). Essentially, use of the quantum group frees
the representation theory from dependence on the Jantzen region and
allows a 1--1 correspondence between affine Weyl group results and
representation theory results in our context.

(b) If Lemma 3.4 is interpreted as an $\mbox{\mbox{Ext}}^1$ result
and fed into Andersen's formula, it enables $n$ in the inequality in
Theorem 5.2 to be replaced with $n+2$.  This result is almost as
good as Theorem 3.3, though the latter gives the resulting
inequality as an equality.

(c) The hypotheses $a_i \geq 2$ ($i = 1,\ldots,n$) in 5.2 (and in
the auxiliary remarks 5.3(a), 5.3(b) above) {\em can be weakened to
just assuming} $a_i \geq 1$ ($i = 1,\ldots,n$).  To see this, note
that $a_i \geq 2$ condition was used only to guarantee that the
weights $\mu_1-\beta$ were all dominant, with $\beta$ any positive
root.
 The dominance guaranteed, through Kempf's theorem, that the higher derived functors $R^1\mbox{Ind}^G_B(k(\mu_1-\beta))$ were zero.
However, this is true also when $\langle
\mu_1-\beta,\alpha_i^\vee\rangle = -1$ for some $i$. (All
$R^j\mbox{Ind}^G_B(k(\mu_1-\beta))$ vanish in this case; see
\cite[II,5.4(a)]{J}.)
 We find, with the assumption $a_i \geq 1$, for $1 \leq i \leq n$, that the kernel of the map from $\mbox{Ind}^G_B(\mathfrak{b} \otimes k(\mu_1))$
onto a direct sum of $n$ copies of $k(\mu_1)$, is again filtered by
costandard modules. (Any potential section
$\mbox{Ind}^G_Bk(\mu_1-\beta)$, in which $\mu_1-\beta$ is not
dominant, is just zero.)

Thus, assuming that Andersen's formula extends to the quantum case,
as discussed in 5.3(a), we have, in the notation of Theorem 3.3,
$$
\mu(x w_0,vxw_0) \geq n+2,
$$
assuming, as in Theorem 3.3, that $n \geq 4$, but weakening the
requirement $a_i \geq 2$ to $a_i \geq 1$, for all $i =
1,2,\ldots,n$. The assumption $p \geq 3n+2$ can be replaced, then,
with $p \geq 3n$.  (Still $p$ must be large enough for the Lusztig
conjecture to hold.)

(d) Andersen observed that the inequality in Theorem 5.2 can be
improved to an equality. More precisely, for all dominant weights
$\mu_1$, if $p>n+1$ and $p$ is large enough that the Lusztig modular
conjecture holds for $G$, one has
$$\text{dim Ext}^1_G(\Delta(\mu_1)^{(1)}, L(\mu_0) \otimes \nabla(\mu_1)^{(1)}) =
  n+2 - f(\mu_1),$$
  where $f(\mu_1) $ is the number of simple roots orthogonal to
  $\mu_1.$ To see this one first notes that
 $H^1(G_1, H^0(\mu_0))$ = 0
  (\cite{AJ}), where $G_1$ is the first Frobenius kernel of $G$. Here the cohomology group $H^0(\mu_0)$ is the $G$-module $\nabla(\mu_0)$. Then
  $$H^1(G_1, L(\mu_0))= H^0(G_1, H^0(\mu_0)/L(\mu_0)) = L(\alpha_0)
  \oplus  k\oplus k.$$
  Here the first summand comes as in 5.2, and
the appearance of two copies of $k$ is a consequence of Lemma 3.4
  (which  gives $H^1(G, L(\mu_0)) = k\oplus k).$
Since $M = L(\alpha_0) \otimes \nabla(\mu_1)$ has a
  good filtration $M=M_0\supset M_1\supset M_2\supset\cdots\supset M_{r-1}\supset M_r=0$ (this uses
$L(\alpha_0)= \nabla(\alpha_0)$, which holds for $p$ prime to
$n+1$), the dimension of $\text{Hom}_G(\Delta(\mu_1), M)$ is equal
to the
  number of occurrences  of $\nabla(\mu_1)$ in subquotients $M_0/M_1,$ $M_1/M_2,$ ..., $M_{r-1}/M_r$ of any good filtration of $M$, i.e. $n -f(\mu_1).$


\noindent{\bf Acknowledgement:} Part of the work was done during
Xi's visit to the Department of Mathematics, Osaka City University
and to the Department of Mathematics, University of Virginia. Xi is
very grateful to the departments for hospitality and for partial
financial support. The authors are grateful to H. Andersen and J.
Humphreys for helpful comments. Finally we would like to thank the
referees for detailed and valuable comments.

\bibliographystyle{unsrt}

\end{document}